\documentclass[11pt]{article}
\usepackage{latexsym,amssymb,amsmath,amsfonts,amsthm}
\usepackage{graphics,graphicx,mathrsfs,subfigure}
\usepackage{color,xspace}
\topmargin =0mm \headheight=0mm \headsep=0mm
\textheight =220mm \textwidth =160mm
\oddsidemargin=0mm\evensidemargin =0mm
\sloppy \brokenpenalty=10000

\newcommand{\R}{{\mat R}}

\newcommand{\C}{{\mat C}}
\newcommand{\Sp}{{\mat S}}

\newcommand{\be}{\begin{eqnarray}}
\newcommand{\ben}{\begin{eqnarray*}}
\newcommand{\en}{\end{eqnarray}}
\newcommand{\enn}{\end{eqnarray*}}

\newcommand{\mat}{\mathbb}

\newtheorem{theorem}{Theorem}[section]
\newtheorem{lemma}[theorem]{Lemma}

\newtheorem{remark}[theorem]{Remark}

\definecolor{rot}{rgb}{1,0,0}
\definecolor{hw}{rgb}{0,0,1}

\begin{document}
\renewcommand{\theequation}{\arabic{section}.\arabic{equation}}
\title{\bf
 Increasing stability in the n-dimensional inverse source problem with multi-frequencies
}
\author{ Suliang Si\thanks{School of Mathematics and Statistics, Shandong University of Technology,
Zibo, 255000, China ({\tt sisuliang@amss.ac.cn})}}
\date{}



\maketitle

\begin{abstract}
In this paper, we show for the first time the increasing stability of the inverse source problem for the n-dimensional Helmholtz equation at multiple wave numbers, which is different from the two-or three-dimensional Helmholtz equation. In addition, we develop a new, unified approach to study increasing stability in any dimension. The method is based on the Fourier transform and explicit bounds for analytic continuation. 
\end{abstract}

%


\section{Introduction}
\noindent In this paper, we deal with an inverse time-harmonic source  problem for the Helmholtz equation. Our aim is to determine the source function $f(x)$ appearing in the following  equation defined in $\R^n$:
\begin{equation}\label{u}
     \Delta u(x)+k^2u(x)=f(x), \ \,x\in\R^n. \\
\end{equation}
Here $k>0$ is the wave number, $u$ is the radiated wave field and $f\in L^{2}(\R^n)$ is a compactly supported function. We assume that the supp $(f)$ is contained in the ball $B_R$ defined by 
$$B_R:=\{x\in\R^n| \ |x|<R\}$$
for some $R>0$. The following Sommerfeld radiation condition is required to ensure the uniqueness of the wave field $u$:
\begin{equation}
r^{\frac{n-1}{2}}(\partial_r u-iku)=0, \ \,r:=|x|\rightarrow +\infty,
\end{equation}
uniformly in all directions $\hat{x}=x/|x|$.

For a given function $g$ on $\partial B_R$. Consider the following boundary value problem outside $B_R$:
\begin{equation}\label{u2}
\begin{cases}
     \Delta u(x)+k^2u(x)=0, \ \,x\in\R^n\setminus \overline{B_R}, \\
     u=g,  \  \  x\in\partial B_R,\\
    r^{\frac{n-1}{2}}(\partial_r u-iku)=0, \ \,r:=|x|\rightarrow +\infty.  \\ 
\end{cases}
\end{equation}
We assume that $g\in H^{1/2}(\partial B_R)$. It can be shown as in e.g. \cite{CK} that the problem (\ref{u2}) has a unique solution  $u\in H^1_{loc}(\R^n\setminus B_R)$. 
By the trace theorem, we have 
\[\|\frac{\partial u}{\partial\nu}\|_{H^{-1/2}(\partial B_R)}\leq C_1\|u\|_{H^{1}(B_{2R}\setminus\overline{B_R})}\leq C_2\|g\|_{H^{1/2}(\partial B_R)}\]
 for some positive constant $C_1$ and $C_2$. Thus
we may introduce the Dirichlet-to-Neumann (DtN) operator $\mathcal{B}: H^{1/2}(\partial B_R)\rightarrow H^{-1/2}(\partial B_R)$ given by $\mathcal{B}g=\frac{\partial u}{\partial\nu}$ where $u$ is a solution of the problem (\ref{u2}) with the Dirichlet data $u=g$ on $\partial B_R$. Using the DtN operator, we reformulate the Sommerfeld radiation condition into a transparent
boundary condition
$\frac{\partial u}{\partial\nu}=\mathcal{B}g$ on $\partial B_R$,
where $\nu$ is the unit outer normal on $\partial B_R$. Hence one can also obtain the Neumann data on $\partial B_R$ once the
Dirichlet date is available on $\partial B_R$.

Now we are in the position to discuss our inverse source problem:

\textit{\textbf{IP}}. Let $f$ be a complex function with a compact support contained in $B_R$. The inverse problem is to
determine $f$ by using the boundary observation data $u(x,k)|_{\partial B_R}$ with an interval of frequencies $k\in(0,K)$
where $K>1$ is a positive constant.

The study of inverse coefficients and source  problems for partial differential equations is one of the most rapidly growing mathematical research area in the recent years,  and inverse source problems attracted recently much attention. Inverse source problems have indeed many applications in many fields, like for example in antenna synthesis \cite{[1]}, biomedical and medical imaging \cite{[2]} and  tomography \cite{[3],[4]}.

Motivated by these significant applications, the inverse source problems, as an important research subject
in inverse scattering theory, have continuously attracted much attention by many researchers \cite{[BLRX],[BT20],[5],BC,[7],[15],[16],[8],[20],LZZ}. Consequently, a great deal of mathematical and numerical results are available, especially for the acoustic waves or the Helmholtz equations. We state in brief some of the existing results that are relevant to the problem under investigation in this paper. It is well known  that there is an obstruction to uniqueness for inverse source problems for Helmholtz equations with a single frequency data. For the convenience of the reader, we refer to \cite[Chapter 4]{[6]} and \cite{BC}. However,  by considering multi-frequency measurements, the uniqueness can be proved. For this, one can see for example the recent works \cite{[5], [12]} in which uniqueness and stability results have been proved for the recovery of the source term from  knowledge of multi-frequency boundary measurements. In \cite{[7]}, the authors 
treated an interior inverse source problem for the Helmholtz equation from boundary Cauchy data for multiple wave numbers and they showed an increasing stability result with growing $K$ for the problem under consideration. 
Interested readers can also see  \cite{[12]} that claims a uniqueness result and a numerical algorithm for recovering the location and the shape of a supported acoustic source function from boundary measurements at many frequencies.  See also \cite{[BLRX], [8],[11],[15],[16], [20]} and the references therein.
As for increasing stability results proved for coefficients inverse problems, we can refer for example to \cite{[Horst]} and \cite{[U]},  in which  inverse problems of recovering an electric potential appearing in a Schr\"odinger equation have been studied (see also the references therein). In \cite{[BT20]}, the increasing stability for the one-dimensional inverse medium  problem of recovering the refractive index is investigated. However, there are few works on the inverse source problems for the n-dimensional Helmholtz equation and the available results are mainly focused on the two-or three-dimensional Helmholtz equation \cite{[12], [8], [20]}. The stability issue is wide open to be investigated for the n-dimensional Helmholtz equation. The existing results are mainly based on analysis of the fundamental solution for the two- or three- dimensional Helmholtz equation. In this work, we use the Fourier transform and develop a unified approach to study inverse source problems.

The paper is organized as follows. In section \ref{Man}, we briefly show the main result. Section \ref{Inc} is developed to stability analysis of the inverse source problem by using multifrequency data.

\section{Main result}\label{Man}
Let $d$ be an integer and $d>0$, denote a complex-valued functional space:
\[\mathcal{C}_{M,d}=\{f\in H^{2d}(\R^n)| \ ||f||_{H^{2d}(B_R)}\leq M, \ supp(f)\subset B_R, \ f:B_R\rightarrow \C\},\]
where $M>0$ is a constant. For any $v\in H^{1/2}(\partial B_R)$, we set 
\[||v(x,k)||_{\partial B_R}=\int_{\partial B_R}(|\mathcal{B} v(x,k)|^2+k^2|v(x,k)|^2)ds(x).\]

Now we show the main increasing stability result. 
\begin{theorem}\label{1}
Let $f\in\mathcal{C}_{M,d}$. Then there exists a constant $C>0$ depending on $n$, $d$ and $R$ such that 
\begin{equation}\label{f}
 ||f||^2_{L^2{(\R^n)}}\leq C\Big(\epsilon^2+\frac{ M^2}{(K^{\frac{2}{3}}|\ln \epsilon|^{\frac{1}{4}})^{4d-n}}\Big)
\end{equation}
where $K>1$, $4d>n$ and 
\[\epsilon=\big(\int_0^Kk^{n-1}||u(x,k)||_{\partial B_R}dk\big)^{\frac{1}{2}}.\]
\end{theorem}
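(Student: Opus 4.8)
The plan is to work with the Fourier transform $\hat f(\xi)=\int_{\R^n}f(x)e^{-i\xi\cdot x}\,dx$, which extends to an entire function on $\C^n$ because $f$ is supported in $B_R$, and to bound $\|f\|_{L^2(\R^n)}^2=(2\pi)^{-n}\int_{\R^n}|\hat f|^2\,d\xi$ (Plancherel) by controlling $\hat f$ on successively larger balls. First I would turn the boundary data into values of $\hat f$. For $\xi\in\R^n$ with $|\xi|=k\in(0,K)$ the plane wave $v(x)=e^{-i\xi\cdot x}$ solves $\Delta v+k^2v=0$, so Green's second identity on $B_R$ combined with $\Delta u+k^2u=f$ gives
\[
\hat f(\xi)=\int_{\partial B_R}\Big(e^{-i\xi\cdot x}\,\frac{\partial u}{\partial\nu}-u\,\frac{\partial}{\partial\nu}e^{-i\xi\cdot x}\Big)\,ds(x),
\]
where $\partial u/\partial\nu=\mathcal{B}u$ by the transparent boundary condition. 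Since $|e^{-i\xi\cdot x}|=1$ and $|\nabla e^{-i\xi\cdot x}|=k$ on $\partial B_R$, Cauchy--Schwarz yields $|\hat f(\xi)|^2\le C\|u(\cdot,k)\|_{\partial B_R}$; writing $\xi=k\theta$ and integrating over $\theta\in\mathbb{S}^{n-1}$ and $k\in(0,K)$ with the polar Jacobian $k^{n-1}$ reproduces the data functional,
\[
\int_{|\xi|<K}|\hat f(\xi)|^2\,d\xi\le C\int_0^K k^{n-1}\|u(\cdot,k)\|_{\partial B_R}\,dk=C\epsilon^2.
\]

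\noindent Next, since $\mathrm{supp}\,f\subset B_R$ and $\|f\|_{L^2(B_R)}\le\|f\|_{H^{2d}(B_R)}\le M$, the Paley--Wiener bound $|\hat f(\xi)|\le\|f\|_{L^1(B_R)}e^{R|\mathrm{Im}\,\xi|}\le CMe^{R|\mathrm{Im}\,\xi|}$ holds on $\C^n$, so along each complex ray $\hat f$ is entire of exponential type $R$. The heart of the argument is then to propagate the smallness on $\{|\xi|<K\}$ to a larger real ball $\{|\xi|<\rho\}$, $\rho>K$. Fixing $\theta\in\mathbb{S}^{n-1}$ and setting $g_\theta(z)=\hat f(z\theta)$ produces a one-variable entire function with $|g_\theta(z)|\le CMe^{R|\mathrm{Im}\,z|}$ whose restriction to $(0,K)$ is small in the mean over $\theta$. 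Mapping a lens-shaped neighbourhood of $(0,\rho)$ of diameter comparable to $\rho$ onto a standard domain and invoking the two-constants (harmonic-measure) theorem bounds $|g_\theta|$ on $(K,\rho)$ by a weighted geometric mean $\epsilon^{\mu}\,(Me^{R\rho})^{1-\mu}$, where $\mu=\mu(\rho)\in(0,1)$ is the harmonic measure of the observed segment and degrades as $\rho$ grows. I expect this to be the main obstacle: one must make the harmonic-measure exponent explicit, control the competition between the exponential growth factor $e^{R\rho}$ and the vanishing power $\epsilon^{\mu}$, and choose $\rho$ to balance them. It is this optimization that forces the scale $\rho\sim K^{2/3}|\ln\epsilon|^{1/4}$ and turns the a priori logarithmic continuation loss into an algebraic rate; the growth of the attainable continuation radius with $K$ is precisely the mechanism of increasing stability.

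\noindent Finally, integrating the ray-by-ray continuation bound over directions and radii (which contributes the $n$-dimensional volume factor $\rho^n$) controls $\int_{|\xi|<\rho}|\hat f|^2$, while for the complementary frequencies the a priori smoothness gives, from $\big\||\xi|^{2d}\hat f\big\|_{L^2}\le\|f\|_{H^{2d}}\le M$,
\[
\int_{|\xi|>\rho}|\hat f|^2\,d\xi\le\rho^{-4d}\int_{|\xi|>\rho}|\xi|^{4d}|\hat f|^2\,d\xi\le M^2\rho^{-4d}.
\]
Adding the two regions, using Plancherel, and optimizing the free radius at the scale $\rho\sim K^{2/3}|\ln\epsilon|^{1/4}$ yields $\|f\|_{L^2(\R^n)}^2\le C\big(\epsilon^2+M^2\rho^{-(4d-n)}\big)$, which is \eqref{f}. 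The exponent $4d-n$ rather than $4d$ records exactly the volume factor $\rho^n$ picked up when the one-dimensional continuation estimate is integrated over the ball, and the standing hypothesis $4d>n$ is what guarantees a genuinely decreasing rate in this second term.
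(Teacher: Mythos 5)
Your overall architecture matches the paper's: Green's identity converts the boundary data into control of $\widehat{f}$ on $\{|\xi|<K\}$, a Paley--Wiener bound gives exponential-type growth, the $H^{2d}$ bound handles frequencies above a cut-off $\rho$, and the cut-off is optimized at $\rho\sim K^{2/3}|\ln\epsilon|^{1/4}$. But the central step --- analytic continuation from the observed band to $(K,\rho)$ --- is set up in a way that fails. You fix $\theta\in\mathbb{S}^{n-1}$ and apply the two-constants theorem to $g_\theta(z)=\widehat{f}(z\theta)$ on a lens around $(0,\rho)$, using that $g_\theta$ is ``small'' on $(0,K)$. The harmonic-measure argument requires a sup-norm bound for $g_\theta$ on the observation segment, but the data functional $\epsilon^2=\int_0^K k^{n-1}\|u(\cdot,k)\|_{\partial B_R}\,dk$ only controls an integral in $k$: the identity $|\widehat{f}(k\theta)|^2\le C\|u(\cdot,k)\|_{\partial B_R}$ is uniform in $\theta$, but $\|u(\cdot,k)\|_{\partial B_R}$ can be large on a set of $k$ of small measure without increasing $\epsilon$. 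So pointwise-in-$k$ smallness of $g_\theta$ on $(0,K)$ is simply not available, and the two-constants theorem cannot be invoked as stated. (Averaging the continuation estimate over $\theta$ via Jensen/H\"older would handle the ``mean over $\theta$'' issue; the mean-over-$k$ issue is the real obstruction.)

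The paper's key idea, which is exactly what is missing, is to apply the continuation lemma not to $\widehat{f}$ along rays but to the single one-variable function $I_1(s)=\int_{|\xi|\le s}|\widehat{f}(\xi)|^2\,d\xi$. Writing the radial integral with the substitution $k=st$ and factoring $|\widehat{f}(st\hat{\xi})|^2=\big(\int_{\R^n} f(x)e^{-ist\hat{\xi}\cdot x}dx\big)\big(\int_{\R^n}\overline{f(x)}e^{ist\hat{\xi}\cdot x}dx\big)$ shows that $I_1$ extends to an entire function of $s$ with $|I_1(s)|\le C|s|^ne^{2R|\mathrm{Im}\,s|}M^2$; and because the integrand is nonnegative for real $s\le K$, the data give the \emph{pointwise} bound $|I_1(s)|\le C\epsilon^2$ for every $s\in[0,K]$ --- precisely the sup-norm hypothesis that the sector lemma of Cheng--Isakov--Lu (the paper's Lemma 3.3) requires. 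The continuation then bounds $I_1(s_0)$, the whole ball integral, so no separate volume factor is needed, and the proof closes with the same optimization you describe. Any repair of your ray-by-ray scheme --- e.g., replacing $g_\theta$ by $\int_0^1 z^nt^{n-1}\widehat{f}(zt\theta)\overline{\widehat{f}(\bar z t\theta)}\,dt$ to restore positivity and pointwise smallness --- reconstructs this construction, so the integrated-radius trick is not cosmetic but the essential missing ingredient. A minor point: your tail bound $\int_{|\xi|>\rho}|\widehat{f}|^2\,d\xi\le CM^2\rho^{-4d}$ is fine, indeed sharper than the paper's $CM^2\rho^{-(4d-n)}$; the exponent $4d-n$ in the theorem comes from the paper's pointwise tail estimate $|\widehat{f}(k\hat\xi)|\le CMk^{-2d}$ integrated against $k^{n-1}$, not from a volume factor in the middle region as you suggest.
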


\begin{remark}
There are two parts in the stability estimates (\ref{f}): the first part is the data discrepancy and the second part comes from the high frequency tail of the function. It is clear to see that the stability increases as $K$ increases, i.e., the problem is more stable as more frequencies data are used.
\end{remark}

\begin{remark}
The idea was firstly proposed in \cite{[7]} by separating the stability into the data discrepancy and high frequency tail where the latter was estimated by the unique continuation for the three-dimensional inverse source scattering problem. Our stability result in this work is consistent with the one in \cite{BLZ,[20]} for both the two and three-dimensional inverse scattering problems.
\end{remark}

\section{Increasing stability}\label{Inc}
Let $\xi\in\R^n$ with $|\xi|=k$.
Multiplying $e^{-i\xi\cdot x}$ on both sides of (\ref{u}) and integrating over $B_R$, we obtain 
\begin{equation}
\nonumber
\int_{B_R}f(x)e^{-i\xi\cdot x}dx=\int_{\partial B_R}e^{-i\xi\cdot x}(\partial_\nu u+i\xi\cdot\nu u)ds(x), \, \  |\xi|=k\in(0,+\infty).
\end{equation}
Since supp $(f)$ is contained in the ball $B_R$, we have 
\begin{equation}
\nonumber
\int_{\R^n}f(x)e^{-i\xi\cdot x}dx=\int_{\partial B_R}e^{-i\xi\cdot x}(\partial_\nu u+i\xi\cdot\nu u)ds(x), \ |\xi|=k\in(0,+\infty)
\end{equation}
which gives 
\begin{equation}\label{Fof}
\big|\widehat{f}(\xi)\big|^2\leq C\int_{\partial B_R}(|\partial_\nu u|^2+k^2|u|^2)ds(x), \ |\xi|=k\in(0,+\infty),
\end{equation} 
where $C>0$ is a constant and depends on $n$ and $R$.
Using the spherical polar coordinates 
\[\xi=k\hat{\xi}=k(\cos\varphi_1,\sin\varphi_1\cos\varphi_2,\cdot\cdot\cdot,\sin\varphi_1\cdot\cdot\cdot \sin\varphi_{n-2}\sin\varphi_{n-1}),\]
we obtain that
\begin{equation}
\nonumber
\begin{split}
\int_{\R^n}\big|\widehat{f}(\xi)\big|^2d\xi=\int_0^{2\pi}d\varphi_{n-1}\int_0^{\pi}\sin\varphi_{n-2}d\varphi_{n-2}\cdot\cdot\cdot\int_0^{\pi}\sin\varphi_1^{n-2}d\varphi_{1}\int_0^{+\infty}k^{n-1}
\big|\widehat{f}(\xi)\big|^2dk.
\end{split}
\end{equation}
It follows from the Plancherel theorem that 
\begin{equation}
\nonumber
\begin{split}
(2\pi)^n||f||^2_{L^2(\R^n)}=||\widehat{f}||^2_{L^2(\R^n)}=\int_{\R^n}|\widehat{f}(\xi)|^2d\xi=\int_{|\xi|\leq s}|\widehat{f}(\xi)|^2d\xi+\int_{|\xi|>s}|\widehat{f}(\xi)|^2d\xi.
\end{split}
\end{equation}
Denote
\begin{equation}\label{I1s}
\begin{split}
I_1(s)&=\int_{|\xi|\leq s}|\widehat{f}(\xi)|^2d\xi\\
&=
\int_0^{2\pi}d\varphi_{n-1}\int_0^{\pi}\sin\varphi_{n-2}d\varphi_{n-2}\cdot\cdot\cdot\int_0^{\pi}\sin\varphi_1^{n-2}d\varphi_{1}\int_0^{s}k^{n-1}
\big|\widehat{f}(k\hat{\xi})\big|^2dk.\\
\end{split}
\end{equation}
Combining (\ref{Fof}) and (\ref{I1s}), we get 
\begin{equation}\label{I12}
|I_1(s)|\leq C\epsilon^2 \ \ \mbox{for all} \ s\in[0,K],
\end{equation}
where $C>0$ depends on $R$ and $n$. Noting that the analyticity of $I_1(s)$ in (\ref{I1s}) only depends on $\int_0^{s}k^{n-1}
\big|\widehat{f}(k\hat{\xi})\big|^2dk$ with
$\widehat{f}$ the Fourier transform of $f$ given by
 $\widehat{f}(k\hat{\xi})=\int_{\R^n}f(x)e^{-ik\hat{\xi}\cdot x}dx$, we then substitute
$k=st$, $t\in(0,1)$  to obtain 
\begin{equation}\label{right}
\begin{split}
\int_0^{s}k^{n-1}
\big|\widehat{f}(k\hat{\xi})\big|^2dk&=\int_0^{1}s^nt^{n-1}
|\int_{\R^n}f(x)e^{-ist\hat{\xi}\cdot x}dx|^2dt\\
&=\int_0^{1}s^nt^{n-1}
(\int_{\R^n}f(x)e^{-ist\hat{\xi}\cdot x}dx)(\int_{\R^n}\overline{f(x)}e^{ist\hat{\xi}\cdot x}dx)dt.
\end{split}
\end{equation}
Clearly, the right-hand side of (\ref{right}) is an analytic function of $s=s_1+is_2\in\C$, $s_1,s_2\in\R$. 
Thus $I_1(s)$ is an entire analytic function of $s\in\C$ and the following elementary estimates hold.

\begin{lemma}\label{I1}
Let $||f||_{L^2(\R^n)}\leq M$.
Then we have for all $s=s_1+is_2\in\C$ that
\begin{equation}\label{LI}
|I_1(s)|\leq Cs^ne^{2R|s_2|}M^2,
\end{equation}
where $C>0$ depends on $n$ and $R$.
\end{lemma}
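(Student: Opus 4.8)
The plan is to estimate $I_1(s)$ directly from the integral representation (\ref{right}), exploiting the compact support of $f$ to control the exponential factor that appears once $s$ becomes complex. The crucial observation is a Paley--Wiener type bound on the kernel: writing $s = s_1 + is_2$ and using $|\hat{\xi}| = 1$, for $x \in B_R$ and $t \in (0,1)$ we have
\[
\big|e^{-ist\hat{\xi}\cdot x}\big| = e^{s_2 t\,\hat{\xi}\cdot x} \leq e^{t|s_2|\,|\hat{\xi}\cdot x|} \leq e^{R|s_2|},
\]
since $|\hat{\xi}\cdot x| \leq |x| < R$ and $0 < t < 1$. Exactly the same bound holds for the conjugate kernel $e^{ist\hat{\xi}\cdot x}$, because its modulus is $e^{-s_2 t\,\hat{\xi}\cdot x}$.

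Next I would bound each of the two inner integrals appearing in (\ref{right}). Since $\mathrm{supp}(f) \subset B_R$, the Cauchy--Schwarz inequality together with the kernel estimate gives
\[
\Big|\int_{\R^n} f(x)\, e^{-ist\hat{\xi}\cdot x}\, dx\Big| \leq e^{R|s_2|}\int_{B_R} |f(x)|\, dx \leq e^{R|s_2|}\,|B_R|^{1/2}\,\|f\|_{L^2(B_R)} \leq C e^{R|s_2|} M,
\]
and the analogous bound holds for the factor involving $\overline{f}$. Multiplying the two estimates bounds the product in the integrand of (\ref{right}) by $C^2 e^{2R|s_2|} M^2$, uniformly in $t \in (0,1)$.

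Inserting this into (\ref{right}) and integrating the remaining weight $s^n t^{n-1}$ in $t$ over $(0,1)$ contributes a factor $|s|^n/n$, while the finite angular integrals displayed in (\ref{I1s}) contribute only an $n$-dependent constant (essentially the surface measure of the unit sphere). Collecting these factors yields $|I_1(s)| \leq C|s|^n e^{2R|s_2|} M^2$ with $C$ depending only on $n$ and $R$, which is precisely the claimed bound (\ref{LI}) (where $s^n$ is read as $|s|^n$).

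I do not anticipate any serious obstacle: the lemma is an elementary analytic-continuation estimate, and the work reduces to the single kernel bound above. The only point requiring genuine care is that the restriction $t \in (0,1)$ combined with the support condition $|x| < R$ yields the exponent $R|s_2|$ rather than a larger constant; this is exactly what produces the sharp linear growth rate $2R$ in the imaginary direction, which is what the subsequent analytic-continuation argument needs.
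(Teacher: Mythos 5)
Your proof is correct and follows essentially the same route as the paper: both start from the representation of $I_1(s)$ obtained by the substitution $k=st$, bound the kernel by $|e^{-ist\hat{\xi}\cdot x}|\leq e^{R|s_2|}$ for $x\in B_R$ and $t\in(0,1)$, and then use the Cauchy--Schwarz inequality on $B_R$ to convert to $\|f\|_{L^2}^2\leq M^2$, with the $t$-integral and the finite angular integrals supplying the factor $|s|^n$ times a constant depending on $n$ and $R$. Your added remark that $s^n$ in (\ref{LI}) should be read as $|s|^n$ is a fair clarification of the paper's slightly informal statement.
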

\begin{proof}
Let $k=st$, $t\in(0,1)$. A simple calculations yields 
\begin{equation}
\nonumber
\begin{split}
I_1(s)&=\int_{|\xi|\leq s}|\widehat{f}(\xi)|^2d\xi\\
&=
\int_0^{2\pi}d\varphi_{n-1}\int_0^{\pi}\sin\varphi_{n-2}d\varphi_{n-2}\cdot\cdot\cdot\int_0^{\pi}\sin\varphi_1^{n-2}d\varphi_{1}\int_0^{s}k^{n-1}
|\widehat{f}(k\hat{\xi})|^2dk\\
&=\int_0^{2\pi}d\varphi_{n-1}\int_0^{\pi}\sin\varphi_{n-2}d\varphi_{n-2}\cdot\cdot\cdot\int_0^{\pi}\sin\varphi_1^{n-2}d\varphi_{1}\int_0^{1}s^nt^{n-1}
|\widehat{f}(st\hat{\xi})|^2dt\\
&=\int_0^{2\pi}d\varphi_{n-1}\int_0^{\pi}\sin\varphi_{n-2}d\varphi_{n-2}\cdot\cdot\cdot\int_0^{\pi}\sin\varphi_1^{n-2}d\varphi_{1}\int_0^{1}s^nt^{n-1}
|\int_{\R^n}f(x)e^{-ist\hat{\xi}\cdot x}dx|^2dt.
\end{split}
\end{equation}
Noting that $|e^{-ist\hat{\xi}\cdot x}|\leq e^{R|s_2|}$ for all $x\in B_R$, we obtain by using the Schwarz inequality that 
\begin{equation}
\nonumber
|I_1(s)|\leq Cs^ne^{2R|s_2|}\int_{\R^n}|f(x)|^2dx,
\end{equation}
where $C>0$ depends on $n$ and $R$,
Then we complete the proof of (\ref{LI}).
\end{proof}
Denote
\begin{equation}
\nonumber
\begin{split}
I_2(s)&=\int_{|\xi|> s}|\widehat{f}(\xi)|^2d\xi\\
&=
\int_0^{2\pi}d\varphi_{n-1}\int_0^{\pi}\sin\varphi_{n-2}d\varphi_{n-2}\cdot\cdot\cdot\int_0^{\pi}\sin\varphi_1^{n-2}d\varphi_{1}\int_s^{+\infty}k^{n-1}
\big|\widehat{f}(k\hat{\xi})\big|^2dk.\\
\end{split}
\end{equation}
Now we estimate $I_2(s)$.
\begin{lemma}\label{lem3.8}
Let $f\in \mathcal{C}_{M,d}$.
For any $s>0$, we have 
\begin{equation}\label{I_2}
|I_2(s)|\leq C\frac{M^2}{s^{4d-n}},
\end{equation}
where $C>0$ depends on $n$ and $R$.
\end{lemma}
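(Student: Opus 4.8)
The plan is to reduce the whole estimate to a pointwise decay bound on $\widehat{f}$ and then integrate in polar coordinates. Since $f\in\mathcal{C}_{M,d}$, it lies in $H^{2d}(\R^n)$, is supported in $B_R$, and satisfies $\|f\|_{H^{2d}(B_R)}\leq M$. The key observation is that for every multi-index $\alpha$ with $|\alpha|=2d$ one has $\widehat{\partial^\alpha f}(\xi)=(i\xi)^\alpha\widehat{f}(\xi)$, so that $|\xi^\alpha|\,|\widehat{f}(\xi)|=|\widehat{\partial^\alpha f}(\xi)|\leq\|\partial^\alpha f\|_{L^1(B_R)}$. Because $f$ is supported in $B_R$, Cauchy--Schwarz gives $\|\partial^\alpha f\|_{L^1(B_R)}\leq |B_R|^{1/2}\|\partial^\alpha f\|_{L^2(B_R)}\leq CM$.

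First I would upgrade these individual bounds to the full radial power. Using the multinomial identity $|\xi|^{4d}=\big(\sum_j \xi_j^2\big)^{2d}=\sum_{|\alpha|=2d}\binom{2d}{\alpha}(\xi^\alpha)^2$ and summing the squared estimates over all $\alpha$ with $|\alpha|=2d$ yields
\[
|\xi|^{4d}\,|\widehat{f}(\xi)|^2\leq CM^2,\qquad \xi\neq 0,
\]
that is, the pointwise decay $|\widehat{f}(\xi)|^2\leq CM^2|\xi|^{-4d}$. This is the heart of the argument and the one step that genuinely uses both the Sobolev regularity and the compact support of $f$; everything afterward is elementary.

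Next I would insert this decay estimate into the polar-coordinate formula for $I_2(s)$. Writing $|\widehat{f}(k\hat{\xi})|^2\leq CM^2 k^{-4d}$, the angular integrals contribute only the finite surface measure of the unit sphere, while the radial integral becomes $\int_s^{+\infty}k^{n-1}k^{-4d}\,dk=\int_s^{+\infty}k^{n-1-4d}\,dk$. The hypothesis $4d>n$ guarantees $n-1-4d<-1$, so this tail integral converges and equals $\frac{1}{4d-n}\,s^{n-4d}=\frac{1}{(4d-n)\,s^{4d-n}}$.

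Finally, collecting the constants from the angular integration, from the Sobolev bound, and from the factor $(4d-n)^{-1}$ into a single constant $C$ depending on $n$, $R$ (and $d$) gives $|I_2(s)|\leq CM^2/s^{4d-n}$, as claimed. I expect the main obstacle to be justifying the pointwise bound in the first step cleanly — in particular, passing from the $L^2$ control of the derivatives to $L^1$ control via compact support and assembling the separate multi-index estimates into the single power $|\xi|^{4d}$; once that bound is in hand, the condition $4d>n$ is precisely what renders the remaining radial integral finite and produces the stated rate.
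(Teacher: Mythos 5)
Your proof is correct and follows essentially the same route as the paper: both derive the pointwise decay $|\widehat{f}(\xi)|\leq CM|\xi|^{-2d}$ from the $H^{2d}$ regularity and compact support of $f$, then integrate the tail in polar coordinates using $4d>n$. The only cosmetic difference is that the paper obtains the decay by iterating the Laplacian ($\widehat{\Delta^d f}(\xi)=(-|\xi|^2)^d\widehat{f}(\xi)$) while you sum the multi-index identities $\widehat{\partial^\alpha f}(\xi)=(i\xi)^\alpha\widehat{f}(\xi)$ over $|\alpha|=2d$ via the multinomial expansion of $|\xi|^{4d}$; these are interchangeable.
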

\begin{proof} 
Let $\Delta f(x)=\partial_{x_1}^2f(x)+\partial_{x_2}^2f(x)+\cdot\cdot\cdot+\partial_{x_n}^2f(x)$.
Denote $\Delta^mf(x):=\underbrace{\Delta\cdot\cdot\cdot\Delta(\Delta }_{m}f(x))$.
Since
$\widehat{f}$ is the Fourier transform of $f$ given by 
\begin{equation}\label{Ff}
\widehat{f}(k\hat{\xi})=\int_{\R^3}e^{-ik\hat{\xi}\cdot x}\; f(x)dx.
\end{equation}
Multiplying $(-ik\hat{\xi}_j)^{2}$,\ $j=1, 2,\cdot\cdot\cdot,n$ on both sides of (\ref{Ff}), we obtain 
\begin{eqnarray}
(-ik\hat{\xi}_j)^{2}\widehat{f}(k\hat{\xi})&=&(-ik\hat{\xi}_j)^{2}\int_{\R^3}e^{-ik\hat{\xi}\cdot x}\; f(x)dx\cr
&=&\int_{\R^3}\partial_{x_j}^2(e^{-ik\hat{\xi}\cdot x})\; f(x)dx\cr
&=&\int_{\R^3}(e^{-ik\hat{\xi}\cdot x})\; \partial_{x_j}^2f(x)dx.
\end{eqnarray}
Adding the above equality for $j=1, 2,\cdot\cdot\cdot,n$ with the aid of $\hat{\xi}_1^2+\hat{\xi}_2^2+\cdot\cdot\cdot+\hat{\xi}_n^2=1$ now gives 
\begin{equation}
(-ik)^{2}\widehat{f}(k\hat{\xi})=\int_{\R^3}(e^{-ik\hat{\xi}\cdot x})\;\Delta f(x)dx.
\end{equation}
Similarly, repeating the above process $d$ times shows that
\begin{equation}\label{nf}
(-ik)^{2d}\widehat{f}(k\hat{\xi})=\int_{\R^3}(e^{-ik\hat{\xi}\cdot x})\;\Delta^d f(x)dx.
\end{equation} 
Hence one immediately has from (\ref{nf}) that 
\begin{equation}\nonumber
|\widehat{f}(k\hat{\xi})|\leq C\frac{M^2}{k^{2d}}\quad\mbox{for\ all}\quad \hat{\xi}\in \Sp^{n-1},   \ k>0,
\end{equation}
where $C>0$ depends on $n$ and $R$. This leads to
\begin{equation}
\nonumber
\begin{split}
I_2(s)&=
\int_0^{2\pi}d\varphi_{n-1}\int_0^{\pi}\sin\varphi_{n-2}d\varphi_{n-2}\cdot\cdot\cdot\int_0^{\pi}\sin\varphi_1^{n-2}d\varphi_{1}\int_s^{+\infty}k^{n-1}
|\widehat{f}(k\hat{\xi})|^2dk\\
&\leq C\frac{M^2}{s^{4d-n}},
\end{split}
\end{equation}
which implies (\ref{I_2}).
\end{proof}
Let us recall the following result, which is proved in \cite{[7]}.
\begin{lemma}\label{2}
  Let $J(z)$ be an analytic function in $S=\{z=x+iy\in \mathbb{C}:-\frac{\pi}{4}<\arg z<\frac{\pi}{4}\}$ and continuous in $\overline{S}$ satisfying
  \begin{equation}
  \nonumber
  \begin{cases}
    |J(z)| \leq \epsilon, \  & z\in (0,L], \\
    |J(z)| \leq V, \  & z\in S,\\
    |J(0)|  =0.
  \end{cases}
  \end{equation}
Then there exists a function $\mu(z)$ satisfying
  \begin{equation}
  \nonumber
  \begin{cases}
   \mu (z)  \geq\frac{1}{2},\ \ &z\in (L,2^{\frac{1}{4}}L), \\
   \mu (z)  \geq\frac{1}{\pi}((\frac{z}{L})^4-1)^{-\frac{1}{2}},\ \ & z\in (2^{\frac{1}{4}}L, +\infty)
  \end{cases}
  \end{equation}
   such that
\begin{equation}
\nonumber
 |J(z)|\leq V\epsilon^{\mu(z)} \quad \mbox{for all} \quad z\in (L, +\infty).
\end{equation}
\end{lemma}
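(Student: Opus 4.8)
The plan is to read the desired inequality as a two-constants (harmonic measure) estimate, in which the exponent $\mu(z)$ is a lower bound for the harmonic measure of the segment $(0,L]$ — where $J$ is small — with respect to the sector $S$ slit along that segment. First I would dispose of the trivial case $\epsilon\ge V$ (any harmonic measure is at most $1$, so the bound is immediate), and assume $\epsilon\le V$. Writing $u=\log|J|$, the hypotheses say that $u$ is subharmonic on $S$, that $u\le\log V$ throughout $S$, and that $u\le\log\epsilon$ on $(0,L]$; the condition $|J(0)|=0$ guarantees that the small bound persists up to the vertex, so the $\epsilon$-set is the closed segment $[0,L]$. Because the bound $u\le\log V$ holds on the whole (unbounded) sector, the global bound already controls the behaviour at infinity, and the maximum principle applies without an additional Phragm\'en--Lindel\"of growth hypothesis. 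The two-constants theorem then gives $|J(z)|\le\epsilon^{\omega(z)}V^{1-\omega(z)}\le V\epsilon^{\omega(z)}$ on $S$, where $\omega(z)$ denotes the harmonic measure of $[0,L]$ in $S\setminus[0,L]$. It therefore suffices to bound $\omega$ from below on the real axis $z>L$ by the stated $\mu$.

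Next I would compute $\omega$ explicitly through a chain of conformal maps, using the conformal invariance of harmonic measure. The map $\zeta=z^2$ opens the sector $S$ of half-angle $\pi/4$ onto the right half-plane $\{\mathrm{Re}\,\zeta>0\}$, sending the slit $[0,L]$ to the real slit $[0,L^2]$ and the real point $z>L$ to $\zeta_0=z^2$. Squaring again, $w=\zeta^2=z^4$ maps the slit half-plane $\{\mathrm{Re}\,\zeta>0\}\setminus[0,L^2]$ conformally onto the slit plane $\mathbb{C}\setminus(-\infty,L^4]$, carrying the $\omega=0$ part of the boundary onto $(-\infty,0]$ and the $\omega=1$ slit onto $[0,L^4]$. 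Finally $\eta=\sqrt{w-L^4}=\sqrt{z^4-L^4}$ straightens this onto the right half-plane $\{\mathrm{Re}\,\eta>0\}$, sending the $\omega=1$ set to the boundary segment $i(-L^2,L^2)$ and the point $z$ to $\eta_0=\sqrt{z^4-L^4}>0$ on the positive real axis. Since the harmonic measure of a boundary segment in a half-plane equals the angle it subtends divided by $\pi$, I obtain the closed form
\[
\omega(z)=\frac{2}{\pi}\arctan\frac{L^2}{\sqrt{z^4-L^4}}=\frac{2}{\pi}\arctan\frac{1}{\sqrt{(z/L)^4-1}},\qquad z>L.
\]

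Finally I would extract the two stated regimes from this formula. On $(L,2^{1/4}L)$ one has $(z/L)^4-1\in(0,1)$, so the argument of the arctangent exceeds $1$, whence $\omega(z)>\frac{2}{\pi}\cdot\frac{\pi}{4}=\frac12$; this yields the first case, $\mu\ge\frac12$. On $(2^{1/4}L,+\infty)$ set $x=\big((z/L)^4-1\big)^{-1/2}\in(0,1)$; the elementary inequality $\arctan x\ge x/2$ on $(0,1)$ (the difference vanishes at $0$ and has derivative $\tfrac1{1+x^2}-\tfrac12>0$ there) gives $\omega(z)=\frac{2}{\pi}\arctan x\ge\frac{1}{\pi}x=\frac{1}{\pi}\big((z/L)^4-1\big)^{-1/2}$, which is the second case. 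Taking $\mu$ equal to these lower bounds and combining with the two-constants estimate of the first step completes the argument.

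I expect the main obstacle to be the rigorous justification of the harmonic-measure step on the unbounded slit domain: one must confirm that the two-constants inequality is valid up to the corner at the origin and uniformly at infinity (here the global bound $|J|\le V$ is essential), and one must track the branch choices and the boundary-value correspondences through the successive squaring and square-root maps so that the precise arctangent formula for $\omega$, and not merely its qualitative shape, is produced. Once that closed form is in hand, the two elementary estimates above are routine.
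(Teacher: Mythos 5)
The paper does not actually prove this lemma: it is imported verbatim from \cite{[7]} with the remark that the proof is given there, so the only meaningful comparison is with that cited argument. Your reconstruction is correct and is in substance exactly that argument: read the estimate as a two-constants inequality whose exponent is the harmonic measure $\omega(z)$ of the slit $(0,L]$ in $S\setminus(0,L]$ (the global bound $|J|\le V$ supplies the Phragm\'en--Lindel\"of control at infinity, since a bounded-above subharmonic function needs boundary control only off a polar set, and $\{\infty\}$ is polar), compute $\omega$ exactly via the conformal chain amounting to $\eta=\sqrt{z^4-L^4}$, which yields $\omega(z)=\tfrac{2}{\pi}\arctan\big(((z/L)^4-1)^{-1/2}\big)$ for real $z>L$, and finish with $\arctan x\ge \pi/4$ for $x\ge 1$ and $\arctan x\ge x/2$ for $0<x\le 1$. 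Your mapping bookkeeping and both elementary inequalities check out.

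One blemish, which you share with the statement as quoted: your disposal of the ``trivial case'' $\epsilon\ge V$ and the chain $\epsilon^{\omega}V^{1-\omega}\le V\epsilon^{\omega}\le V\epsilon^{\mu}$ actually require $\epsilon<1\le V$, not merely $\epsilon\le V$ (the middle inequality needs $V\ge 1$, the last needs $\epsilon\le 1$). This is not a repairable gap in your argument but a defect of the bare statement: for $V\le\epsilon<1$ the conclusion is false --- take $L=1$, $V=\epsilon=e^{-100}$ and $J(z)=\epsilon z/(z+1)$, which satisfies all hypotheses, yet at $z=10$ one has $|J(10)|=\tfrac{10}{11}\epsilon$ while $V\epsilon^{\mu(10)}\approx 0.73\,\epsilon$. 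So the lemma must be read with the implicit normalization $\epsilon<1\le V$, which is how it is applied in this paper (there $\epsilon<e^{-1}$ and $V$ is a constant multiple of $M^{2}$); under that normalization your proof is complete.
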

Using Lemma \ref{2}, we show the relation between $I_1(s)$ for $s\in(K,\infty)$ with $I_1(K)$.
\begin{lemma}
Let $||f||_{L^2(\R^n)}\leq M$. Then there exists a function $\mu(s)$ satisfying 
  \begin{equation}
  \nonumber
  \begin{cases}
   \mu (s)  \geq\frac{1}{2},\ \ &s\in (K,2^{\frac{1}{4}}K), \\
   \mu (s)  \geq\frac{1}{\pi}((\frac{s}{K})^4-1)^{-\frac{1}{2}},\ \ & s\in (2^{\frac{1}{4}}K, +\infty)
  \end{cases}
  \end{equation}
such that 
\begin{equation}\label{CM}
|I_1(s)|\leq CM^2e^{(2R+1)s}\epsilon^{2\mu(s)} \quad \mbox{for all}
\ K<s<+\infty,
\end{equation}
where $C>0$ depends on $n$ and $R$.
\end{lemma}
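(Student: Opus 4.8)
The plan is to feed a renormalized version of $I_1$ into the analytic-continuation estimate of Lemma \ref{2}. Recall that $I_1$ was shown to be entire just after (\ref{right}), but Lemma \ref{I1} tells us it grows like $|s|^n e^{2R|s_2|}M^2$, so it is not bounded on the sector $S$ and cannot be inserted directly. I would therefore tame the exponential growth by setting
\[
J(s)=I_1(s)\,e^{-(2R+1)s},\qquad s=s_1+is_2\in\C,
\]
which is again entire, and then verify the three hypotheses of Lemma \ref{2} with $L=K$.

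The first two hypotheses are immediate. Since $I_1(0)=0$ by the definition (\ref{I1s}), we have $J(0)=0$. On the segment $(0,K]$ the estimate (\ref{I12}) combined with $e^{-(2R+1)s}\le 1$ for $s\ge 0$ gives $|J(s)|\le|I_1(s)|\le C\epsilon^2$, so the quantity playing the role of ``$\epsilon$'' in Lemma \ref{2} is $C\epsilon^2$.

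The only real work is the uniform sector bound $|J(s)|\le V$ for $s\in S$. Here I would combine Lemma \ref{I1} with the geometry of $S=\{-\pi/4<\arg s<\pi/4\}$, namely $|s_2|\le s_1$ and $s_1\ge|s|/\sqrt2$. These give
\[
|J(s)|\le C|s|^n M^2\,e^{\,2R|s_2|-(2R+1)s_1}\le C|s|^n M^2 e^{-s_1}\le C|s|^n M^2 e^{-|s|/\sqrt2},
\]
and since $t\mapsto t^n e^{-t/\sqrt2}$ is bounded on $[0,\infty)$ we conclude $|J(s)|\le V:=CM^2$ on $S$. The decisive point is the choice of exponent $2R+1$: the term $2R$ exactly cancels the worst-case growth $e^{2R|s_2|}$ along the edges of the sector, while the extra $+1$ leaves a genuine decaying exponential that absorbs the polynomial factor $|s|^n$. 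This is the step I expect to be the main obstacle, since everything hinges on matching the exponent to the growth rate supplied by Lemma \ref{I1}.

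Finally I would apply Lemma \ref{2} with $L=K$, data level $C\epsilon^2$ and $V=CM^2$ to obtain $|J(s)|\le CM^2\,(C\epsilon^2)^{\mu(s)}$ for all $s\in(K,\infty)$, with $\mu$ satisfying the stated lower bounds. Undoing the renormalization,
\[
|I_1(s)|=|J(s)|\,e^{(2R+1)s}\le CM^2\,C^{\mu(s)}\epsilon^{2\mu(s)}e^{(2R+1)s},
\]
and because $\mu$ is a harmonic-measure exponent with $0\le\mu\le1$ the factor $C^{\mu(s)}$ is bounded and may be absorbed into $C$, which yields exactly (\ref{CM}).
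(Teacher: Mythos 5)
Your proposal is correct and follows essentially the same route as the paper: the paper likewise sets $J(s)=I_1(s)e^{-(2R+1)s}$, uses $|s_2|\leq s_1$ on the sector together with Lemma \ref{I1} to get the uniform bound $CM^2$, invokes the prior estimate (\ref{I12}) on $(0,K]$, and applies Lemma \ref{2} with $L=K$. Your write-up is in fact more careful than the paper's, since you explicitly verify $J(0)=0$, justify $e^{-(2R+1)s}\leq 1$ on $(0,K]$, and note that the factor $C^{\mu(s)}$ arising from the data level $C\epsilon^2$ can be absorbed because $0\leq\mu\leq 1$.
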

\begin{proof}
Let the sector  $S\subset\C$ be given in Lemma \ref{2}. Observe that $|s_2|\leq s_1$ when $s\in S$. It follows from Lemma \ref{I1} that 
\[|I_1(s)e^{-(2R+1)s}|\leq CM^2,\]
where $C>0$ depends on $n$ and $R$.
Recalling from (\ref{I12}) a prior estimate
$
|I_1(s)|\leq C\epsilon^2, \ s\in[0, K]
$.
Then applying Lemma \ref{2} with $L=K$ to be function $J(s):=I_1(s)e^{-(2R+1)s}$,
we conclude that  there exists a function $\mu(s)$ satisfying 
\begin{equation}
\nonumber
\begin{cases}
\mu (s)  \geq\frac{1}{2},\ \ &s\in (K, 2^{\frac{1}{4}}K), \\
\mu (s)  \geq\frac{1}{\pi}((\frac{s}{K})^4-1)^{-\frac{1}{2}},\ \ & s\in (2^{\frac{1}{4}}K, \infty)
\end{cases}
\end{equation}
such that
\begin{equation}\nonumber
|I_1(s)e^{-(2R+1)s}|\leq CM^2\epsilon^{2\mu(s)},
\end{equation}
where $K<s<+\infty$ and $C$ depending on $n$ and $R$. Thus we complete the proof.
\end{proof}
Now we show the proof of Theorem \ref{1}.
If $\epsilon\geq e^{-1}$, then the estimate is 
obvious.
If $\epsilon<e^{-1}$, we discuss (\ref{f}) in two cases.

Case (i):  $2^{\frac{1}{4}}((2R+3)\pi)^{\frac{1}{3}} K^{\frac{1}{3}}< |\ln \epsilon|^{\frac{1}{4}}$. 
Choose $s_0=\frac{1}{((2R+3)\pi)^{\frac{1}{3}}}K^{\frac{2}{3}}|\ln \epsilon|^{\frac{1}{4}}$. It is easy to get $s_0>2^{\frac{1}{4}}K$, then 
\[-\mu(s_0)\leq-\frac{1}{\pi}((\frac{s_0}{K})^4-1)^{-\frac{1}{2}}\leq-\frac{1}{\pi}(\frac{K}{s_0})^2.\]
A direct application of  estimate (\ref{CM}) shows that 
 \begin{eqnarray*}
 |I_1(s_0)|&\leq& CM^2\epsilon^{2\mu(s_0)}e^{(2R+3)s_0}\\
        &\leq& CM^2 e^{(2R+3)s_0-2\mu(s_0)|\ln\epsilon|}\\
           &\leq& CM^2 e^{(2R+3)s_0-\frac{2|\ln \epsilon|}{\pi}(\frac{K}{s_0})^2 }\\
           &=& CM^2 e^{-2(\frac{(2R+3)^2}{\pi})^{\frac{1}{3}}K^{\frac{2}{3}}|\ln \epsilon|^{\frac{1}{2}}(1- \frac{1}{2}|\ln \epsilon|^{-\frac{1}{4}})}.
 \end{eqnarray*}
Noting that $1-\frac{1}{2}|\ln \epsilon|^{-\frac{1}{4}}>\frac{1}{2}$ and $(\frac{(2R+3)^2}{\pi})^{\frac{1}{3}}>1$, we have 
\[|I_1(s_0)|\leq CM^2 e^{-K^{\frac{2}{3}}|\ln \epsilon|^{\frac{1}{2}}}.\]
Using the inequality $e^{-t}\leq\frac{(12d-3n)!}{t^{3(4d-n)}}$ for $t>0$, we get
\begin{equation}\label{ess}
  |I_1(s_0)|\leq C\frac{M^2 }{(K^2|\ln \epsilon|^{\frac{3}{2}})^{4d-n}}.
\end{equation}
Hence
\begin{equation}\label{ess1}
  \begin{split}
    (2\pi)^n||f||^2_{L^2{(\R^n)}}=||\widehat{f}||^2_{L^2{(\R^n)}}&= I_1(s_0)+I_2(s_0)\\
    &\leq C\frac{M^2 }{(K^2|\ln \epsilon|^{\frac{3}{2}})^{4d-n}}+C\frac{M^2 }{(K^{\frac{2}{3}}|\ln \epsilon|^{\frac{1}{4}})^{4d-n}}\\
    &\leq C\frac{M^2 }{(K^{\frac{2}{3}}|\ln \epsilon|^{\frac{1}{4}})^{4d-n}}.
  \end{split}
\end{equation}
Since $K^2|\ln \epsilon|^{\frac{3}{2}} \geq K^{\frac{2}{3}}|\ln \epsilon|^{\frac{1}{4}}$ when $K>1$ and $|\ln\epsilon|\geq1$.

Case (ii):  $|\ln \epsilon|^{\frac{1}{4}}\leq 2^{\frac{1}{4}}((2R+3)\pi)^{\frac{1}{3}} K^{\frac{1}{3}}$. In this case we choose $s_0=K$, then $s_0\geq 2^{-\frac{1}{4}}((2R+3)\pi)^{-\frac{1}{3}}K^{\frac{2}{3}}|\ln \epsilon|^{\frac{1}{4}}$. 
Using  estimate (\ref{I12}), we obtain
\begin{equation}\label{ess2}
  \begin{split}
    (2\pi)^n||f||^2_{L^2{(\R^n)}}=||\widehat{f}||^2_{L^2{(\R^n)}}&= I_1(s_0)+I_2(s_0)\\
&\leq C\big(\epsilon^2+\frac{ M^2}{(K^{\frac{2}{3}}|\ln \epsilon|^{\frac{1}{4}})^{4d-n}}\big).
\end{split}
\end{equation}
Combining (\ref{ess1}) and (\ref{ess2}), we finally get
\begin{equation}\nonumber
 ||f||^2_{L^2{(\R^n)}}\leq C\big(\epsilon^2+\frac{ M^2}{(K^{\frac{2}{3}}|\ln \epsilon|^{\frac{1}{4}})^{4d-n}}\big).
\end{equation}

\section*{Acknowledgment}
The work of Suliang Si is supported by  the Shandong Provincial Natural Science Foundation (No. ZR202111240173). The author would like to thank Guanghui Hu and Jiaqing Yang for helpful discussions. This work does not have any conflicts of interest.

\end{document}